\renewcommand{\leq}{\leqslant}
\renewcommand{\geq}{\geqslant}
\newcommand{\ZZ}{\mathbb{Z}}
\newcommand{\QQ}{\mathbb{Q}}
\newcommand{\RR}{\mathbb{R}}
\newcommand{\CC}{\mathbb{C}}
\newcommand{\NN}{\mathbb{N}}
\newcommand{\PGL}{\mathrm{PGL}}
\newcommand{\GL}{\mathrm{GL}}
\newcommand{\SL}{\mathrm{SL}}
\renewcommand{\O}{\mathrm{O}}
\newcommand{\SO}{\mathrm{SO}}
\newcommand{\Z}{\mathrm{Z}}
\newcommand{\U}{\mathrm{U}}
\newcommand{\M}{\mathrm{M}}
\newcommand{\Wc}{\mathcal{W}}
\newcommand{\Hc}{\mathcal{H}}
\newcommand{\eps}{\varepsilon}
\newcommand{\bs}{\backslash}
\newcommand{\id}{1_n}
\newcommand{\ov}[1]{\overline{#1}}
\DeclareMathOperator{\Ad}{Ad}
\DeclareMathOperator{\sign}{sign}
\DeclareMathOperator{\diag}{diag}
\theoremstyle{plain}
\newtheorem{theorem}{Theorem}
\newtheorem{lemma}{Lemma}
\theoremstyle{remark}
\newtheorem*{remarks}{Remarks}
\theoremstyle{definition}
\newtheorem*{acknowledgement}{Acknowledgements}
\begin{document}

\author{Valentin Blomer}
\author{Gergely Harcos}
\author{P\'eter Maga}

\address{Mathematisches Institut, Bunsenstr. 3-5, D-37073 G\"ottingen, Germany}\email{vblomer@math.uni-goettingen.de}
\address{Alfr\'ed R\'enyi Institute of Mathematics, Hungarian Academy of Sciences, POB 127, Budapest H-1364, Hungary}\email{gharcos@renyi.hu}
\address{Central European University, Nador u. 9, Budapest H-1051, Hungary}\email{harcosg@ceu.edu}
\address{Alfr\'ed R\'enyi Institute of Mathematics, Hungarian Academy of Sciences, POB 127, Budapest H-1364, Hungary}\email{magapeter@gmail.com}

\title{On the global sup-norm of $\GL(3)$ cusp forms}

\thanks{First author partially supported by the DFG-SNF lead agency program grant BL~915/2-1. Second and third author supported by NKFIH (National Research, Development and Innovation Office) grants NK~104183, ERC\underline{\phantom{ }}HU\underline{\phantom{ }}15~118946, K~119528. Second author also supported by ERC grant AdG-321104, and third author also supported by the Postdoctoral Fellowship of the Hungarian Academy of Sciences.}

\keywords{global sup-norm, Whittaker functions, pre-trace formula, asymptotic analysis}

\begin{abstract}
Let $\phi$ be a spherical Hecke--Maa{\ss} cusp form on the non-compact space $\PGL_3(\ZZ)\bs\PGL_3(\RR)$. We establish various pointwise upper bounds for $\phi$ in terms of its Laplace eigenvalue $\lambda_\phi$. These imply, for $\phi$ arithmetically normalized and tempered at the archimedean place, the bound
\[\|\phi\|_\infty\ll_\eps \lambda_{\phi}^{39/40+\eps}\]
for the global sup-norm (without restriction to a compact subset). On the way, we derive a new uniform upper bound for the $\GL_3$ Jacquet--Whittaker function.
\end{abstract}

\subjclass[2010]{Primary 11F72, 11F55; Secondary 33E30, 43A85}

\maketitle

\section{Introduction}

Eigenfunctions of the Laplace--Beltrami operator $\Delta$ on a Riemannian manifold $X$ are the spectral building blocks of the Hilbert space $L^2(X)$. It is therefore a classical question  to study their analytic properties asymptotically as the eigenvalue tends to infinity.  In addition to its intrinsic interest in global analysis, a relation to quantum physics is provided by the fact that $e^{-it\Delta}$ is the time evolution operator of the Schr\"odinger equation describing a freely moving particle on $X$. Therefore, an $L^2$-eigenfunction of $\Delta$ is understood in quantum physics as a bound state, and its absolute square is interpreted as the probability density of the corresponding stationary wave. Somewhat unexpectedly, number theory also enters the scene, namely when the manifold $X$ possesses some additional arithmetic structure such as a commutative family of arithmetically defined Hecke operators (which are normal and commute with $\Delta$ as well). In this case, the most interesting Laplace eigenfunctions are those that are in addition eigenfunctions of the Hecke algebra, and as such are amenable to number theoretic tools. We will present an example of this kind in a moment.

A fundamental quantity associated to an $L^2$-normalized eigenfunction $\phi$ with eigenvalue $\lambda_{\phi}$ is its sup-norm. A good upper bound for $\| \phi \|_{\infty}$ can be seen as a basic measure of equidistribution of its mass on $X$. If $X$ is compact (or $X$ is non-compact but $\phi$ is restricted to a fixed compact subset $\Omega\subseteq X$), then we have the general bound proved by H\"ormander~\cite{Ho}
\begin{equation}\label{laplace1}
\|\phi\|_{\infty}\ll \lambda^{(d-1)/4},\qquad d=\dim X.
\end{equation}
This bound is sharp, e.g. it is attained for the $d$-sphere $S^d$ for any $d\geq 1$ and for special eigenfunctions $\phi$. If $X$ is a compact locally symmetric space of dimension $d$ and rank $r$ (or we restrict to a compact subset $\Omega$ of such a space) and we require that $\phi$ is not only a Laplace eigenfunction but an eigenfunction of all differential operators invariant under the group of isometries of the universal cover of $X$, then we have the stronger bound of Sarnak~\cite{Sa}
\begin{equation}\label{laplace2}
\|\phi\|_{\infty}\ll \lambda^{(d-r)/4}.
\end{equation}
Even though neither \eqref{laplace1} nor \eqref{laplace2} are conjectured to be sharp for negatively curved manifolds, they provide a robust framework to work with.

The situation changes completely for the global sup-norm on non-compact manifolds, in which case \eqref{laplace1} and \eqref{laplace2} no longer need to be true. A typical example is a locally symmetric space $X = \Gamma \backslash G/K$, where $G$ is a non-compact semi-simple Lie group, $K\leq G$ is a maximal compact subgroup, and $\Gamma\leq G$ is a non-uniform lattice. It turns out that in this case the sup-norm of an eigenfunction $\phi$ is often determined by its behavior in the cuspidal regions of $X$, even though it may eventually decay very quickly. The simplest -- and the only thoroughly explored -- example is $G =\SL_2(\RR)$, $K =\SO_2(\RR)$, $\Gamma=\SL_2(\ZZ)$, so that $X = \Gamma\backslash G/K = \Gamma \backslash \mathcal{H}_2$ (where $\mathcal{H}_2 = \{x + iy:\ y > 0\}$) is the familiar modular surface. It is an \emph{arithmetic manifold} in the above sense, as it admits the standard family of Hecke operators. A joint $L^2$-eigenfunction of the Laplace and the Hecke operators is often called a \emph{Hecke--Maa{\ss} cusp form}. It decays exponentially  as $y \rightarrow \infty$, and it is conjectured to satisfy $\|\phi|_{\Omega}\|_\infty\ll_{\eps,\Omega} \lambda_{\phi}^{\eps}$ as $\lambda_{\phi} \rightarrow \infty$ for every compact subset $\Omega \subseteq X$ and every $\eps > 0$, but in the cuspidal region it has considerable size. Precisely, we have $\phi(i\lambda_\phi^{1/2}/(2\pi))=\lambda_\phi^{1/12+o(1)}$, cf.\ \cite{Sa}, reflecting the very similar behavior of the normalized $\GL_2$ Whittaker function
\begin{equation}\label{gl2whittaker}
W_{\nu}(x):=\frac{\sqrt{x}K_{\nu}(2\pi x)}{|\Gamma(1/2+\nu)|},\qquad\nu\in i\RR.
\end{equation}
Indeed, $W_\nu(x)$ decays exponentially as $x\rightarrow \infty$, but it has a large bump: $W_{it}(t/(2\pi)) \asymp t^{1/6}$ (see \cite{Ba} for a uniform asymptotic expansion). \\

A systematic study of this behavior for Hecke--Maa{\ss} cusp forms on the locally symmetric space
\[X=X_n=\GL_n(\ZZ)\Z(\RR)\bs\GL_n(\RR)/\O_n(\RR)\]
and its congruence covers was initiated by Brumley and Templier~\cite{BT}.
Here, $\Z(\RR)$ is the center of $\GL_n(\RR)$ and $\O_n(\RR)$ is the orthogonal subgroup.
In particular, it was proved in \cite{BT} that \eqref{laplace2} fails for $X=X_n$ when $n \geq 5$, namely the \emph{global} sup-norm on $X$ is significantly larger than the \emph{local} sup-norm on a fixed compact subset $\Omega \subseteq X$. Precisely, the lower bound by Brumley--Templier~\cite{BT} for the global sup-norm and the upper bound by Blomer--Maga~\cite{BM} for the local sup-norm can be contrasted as
\[\|\phi\|_{\infty}\gg_\eps\lambda_\phi^{n(n-1)(n-2)/24-\eps}>\lambda_\phi^{n(n-1)/8-\delta_n}\gg_{\Omega} \|\phi|_{\Omega}\|_\infty,\]
where $\delta_n>0$ is a constant depending only on $n$. In addition, in the case $n=3$, Brumley and Templier derived the upper bound \cite[Prop.~1.6]{BT}
\begin{equation}\label{BrumleyTemplier}
\| \phi \|_{\infty}  \ll \lambda_{\phi}^{5/2}\qquad\text{on}\qquad X=X_3,
\end{equation}
by using the rapid decay of $\phi$ high in the cusp and making the dependence of \eqref{laplace2} on the injectivity radius in the remaining piece of the manifold explicit. We note in passing that in the case $n=3$, it is known that any savings $\delta_3<1/124$ is admissible\footnote{By saying this we do not mean to imply that $1/124$ is a special threshold beyond which the bound fails.} for the upper bound of the local sup-norm \cite{HRR}, while in the case $n=2$, the global sup-norm problem has been studied extensively (see \cite{IS,Sah1,Sah2,BHMM}
and the references therein).

Despite these important advances, the investigation of the global sup-norm of eigenfunctions on non-compact symmetric spaces of rank exceeding one
is still its infancy, and in this article we take a closer look at the rank two example $n=3$ with the aim of proving considerably
stronger bounds than \eqref{BrumleyTemplier} by a different technique.  With this in mind,
let $\phi$ be a Hecke--Maa{\ss} cusp form on $\GL_3$ over $\QQ$, which is spherical at every place and has trivial central character,
regarded as a complex-valued function on the quotient space $X=X_3$. Alternatively, we may think of $\phi$ as a complex-valued function on $\GL_3(\RR)$ satisfying
\begin{displaymath}
\phi(\gamma h g k) = \phi(g)\qquad\text{for all}\qquad\gamma\in\GL_3(\ZZ),\ h\in\Z(\RR),\ k\in\O_3(\RR),
\end{displaymath}
which is further an eigenfunction of the invariant differential operators and the Hecke operators \cite[Sections~6.1--6.4]{G}. We recall from \cite[Sections~1.2--1.3]{G} that the symmetric space $\Z(\RR)\bs \GL_3(\RR) / \O_3(\RR)$ can be represented by matrices of the generalized upper half-plane
\begin{displaymath}
\Hc_3:=\left\{z=\begin{pmatrix}1 & x_2 & x_3 \\ & 1 & x_1 \\ & & 1\end{pmatrix}
\begin{pmatrix}y_1y_2 & & \\ & y_1 & \\ & & 1\end{pmatrix}:\ y_1,y_2>0,\ x_1,x_2,x_3\in\RR\right\},
\end{displaymath}
and the quotient space $X=\GL_3(\ZZ)\bs\Hc_3$ has a fundamental domain lying in the Siegel set defined by
\begin{equation}\label{xybounds}
|x_1|,|x_2|,|x_3|\leq 1/2\qquad\text{and}\qquad y_1,y_2\geq\sqrt{3}/2.
\end{equation}

In this paper, we provide upper bounds for $|\phi(z)|$ in terms of the height parameters $y_1$, $y_2$ (assuming they are at least $\sqrt{3}/2$) and the Laplace eigenvalue $\lambda_{\phi}$, and examine what they yield for the global sup-norm
\[\|\phi\|_\infty=\sup_{z\in\Hc_3}|\phi(z)|.\]
We shall work with the following natural normalizations:
\begin{itemize}
\item $\phi$ is \emph{arithmetically normalized} if it has leading Fourier coefficient $\lambda_\phi(1,1)=1$ with respect to the standard Jacquet--Whittaker function (cf.\ \cite[Thm.~6.4.11]{G} and \eqref{eq:f-w-expansion} below);\\[-9pt]
\item $\phi$ is \emph{$L^2$-normalized} if it has $L^2$-norm $1$ with respect to the measure on $X$ induced by the standard left-invariant probability measure on $\Hc_3$ (cf.\ \cite[Prop.~1.5.3]{G}).
\end{itemize}
By \cite[Lemma~1]{B} and its proof, these two normalizations differ by a positive constant times $L(1,\phi,\Ad)^{1/2}$ when $\phi$ is tempered at the archimedean place, and this would also be true for non-tempered forms if we slightly renormalized the standard Jacquet--Whittaker function as in the display below \cite[(2.13)]{B} with the effect of correspondingly changing the notion of ``arithmetically normalized''. With these conventions, our main results are as follows (see also the remarks after the theorems):

\begin{theorem}\label{theorem_2} Let $\phi$ be an arithmetically normalized Hecke--Maa{\ss} cusp form on $X$. Assume that $\phi$ is tempered at the archimedean place. Then for any $z\in\Hc_3$ with $y_1,y_2\geq\sqrt{3}/2$, and for any $\eps>0$, we have
\begin{equation}\label{thm2bound}
\phi(z)\ll_\eps\min(y_1,y_2)\left(\frac{\lambda_{\phi}^{1+\eps}}{y_1y_2} + \frac{\lambda_{\phi}^{3/2+\eps}}{(y_1y_2)^2}\right).
\end{equation}
\end{theorem}

\begin{theorem}\label{theorem_3} Let $\phi$ be an $L^2$-normalized Hecke--Maa{\ss} cusp form on $X$. Then for any $z\in\Hc_3$ with $y_1,y_2\geq\sqrt{3}/2$, we have
\begin{equation}\label{thm3bound}\phi(z)\ll\lambda_{\phi}^{3/4} + \lambda_{\phi}^{5/8}y_1y_2.\end{equation}
\end{theorem}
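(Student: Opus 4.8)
The bound \eqref{thm3bound} is asserted for \emph{every} Hecke--Maa{\ss} cusp form, tempered at the archimedean place or not, and carries no factor $\lambda_\phi^\eps$; this points to a proof through the pre-trace formula (spectral positivity) rather than through the Fourier--Whittaker expansion used for Theorem~\ref{theorem_2}. Let $\nu_\phi$ denote the archimedean spectral parameter of $\phi$, of size $\asymp\lambda_\phi^{1/2}$, and recall that for non-tempered $\phi$ the parameter $\nu_\phi$ still lies in a fixed bounded region by the known approximations to the Ramanujan conjecture. The plan is to choose a bi-$\O_3(\RR)$-invariant test function $f$ on $\GL_3(\RR)$ — a point-pair invariant on $\Hc_3$ — whose spherical transform $\widehat f$ is non-negative on the full automorphic spectrum of $X$ (the tempered spherical spectrum, the bounded region of complementary-series parameters of non-tempered cusp forms, the continuous Eisenstein spectrum, and the trivial representation) and satisfies $\widehat f(\nu_\phi)\gg 1$; taking $\widehat f=|\widehat g|^2$ for a suitable auxiliary function $g$ makes the non-negativity automatic. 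The one remaining degree of freedom is the width of $f$ about the identity coset, to be optimized at the end. Writing $g\in\GL_3(\RR)$ for a representative of $z$ and inserting $f$ into the pre-trace formula on $X$, non-negativity of the spectral side yields
\[
|\phi(z)|^2\ \ll\ \frac{1}{\widehat f(\nu_\phi)}\sum_{\gamma}\bigl|f(g^{-1}\gamma g)\bigr|,
\]
the sum running over $\GL_3(\ZZ)$ modulo $\{\pm I\}$.

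The central term $\gamma\in\{\pm I\}$ contributes $f(I)/\widehat f(\nu_\phi)$, which for the optimal $f$ is $\asymp\lambda_\phi^{3/2}$; its square root is precisely the first term $\lambda_\phi^{3/4}$ of \eqref{thm3bound}, namely the bound \eqref{laplace2} of Sarnak for the symmetric space of dimension $d=5$ and rank $r=2$. Everything else is in the remaining sum. Since $z$ lies in a Siegel set with $y_1,y_2\geq\sqrt3/2$, one first checks that the only $\gamma\in\GL_3(\ZZ)$ whose conjugate $g^{-1}\gamma g$ can enter the effective support of $f$ are essentially the upper-triangular unipotent matrices $\gamma=\bigl(\begin{smallmatrix}1&a&c\\&1&b\\&&1\end{smallmatrix}\bigr)$ with $a,b,c\in\ZZ$; the contribution of all other $\gamma$ is controlled by a lattice-point count high in the cusp, in the spirit of \cite{BT,BM}. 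Conjugation by $g=n(x)\,\diag(y_1y_2,y_1,1)$ rescales $a,b,c$ by the factors $y_2^{-1}$, $y_1^{-1}$, $(y_1y_2)^{-1}$ up to a bounded shift from the $x$-part, so the number of relevant $\gamma$ within Riemannian distance $\delta$ of the identity is $\asymp(1+\delta y_2)(1+\delta y_1)(1+\delta y_1y_2)$. Summing the size of $f$ over these conjugates along dyadic shells — where the decisive input is a uniform upper bound for the $\GL_3$ spherical function $\varphi_{\nu_\phi}$ with large (possibly non-tempered) parameter, the exact analogue of the new Jacquet--Whittaker bound underlying Theorem~\ref{theorem_2} — and then optimizing the width, the off-diagonal contribution (after the division by $\widehat f(\nu_\phi)$) comes out as $\ll\lambda_\phi^{5/4}(y_1y_2)^2$, whose square root is the second term $\lambda_\phi^{5/8}y_1y_2$.

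The main obstacle is this spherical-function estimate: one needs a bound for $\varphi_{\nu_\phi}$ that is uniform in the group variable and in the rank-two parameter $\nu_\phi$ of size $\asymp\lambda_\phi^{1/2}$, and strong enough in the degenerate directions occupied by the unipotent conjugates to beat the proliferation of lattice points high in the cusp; this is where the rank-two geometry makes the problem genuinely harder than for $\GL_2$, and where one must exploit the oscillation of $\varphi_{\nu_\phi}$ and not merely its supremum. Two further points need care: verifying the non-negativity of $\widehat f$ on the entire automorphic spectrum, so that the passage to the geometric side is legitimate for non-tempered $\phi$ as well, and performing the pruning of the non-unipotent $\gamma$ uniformly over the whole Siegel set rather than for a single $z$.
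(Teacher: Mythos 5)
Your proposal matches the paper's argument: a positivity-based pre-trace inequality with a fixed-width bi-$\O_3(\RR)$-invariant test function, the central and near-central terms giving $\lambda_\phi^{3/4}$, and a dyadic-shell count of the essentially unipotent $\gamma$ with $\ll(1+Ky_1)(1+Ky_2)(1+Ky_1y_2)$ matrices per shell, producing the off-diagonal total $\lambda_\phi^{5/4}(y_1y_2)^2$ and hence the second term $\lambda_\phi^{5/8}y_1y_2$. The ``main obstacle'' you flag is already resolved in the literature rather than requiring new oscillation estimates: the decay $f_\mu(g)\ll\lambda_\phi^{3/2}\bigl(1+\lambda_\phi^{1/2}\|C(g)\|\bigr)^{-1/2}$ is quoted from \cite{BM,BP}, and the only genuinely new geometric input is Lemma~\ref{lemma:lower_bound_on_cartan_projection}, which converts the condition $\|C(z^{-1}\gamma z)\|\asymp K$ into linear constraints on the entries of $\gamma$ so that the shell count you assert actually goes through.
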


\begin{theorem}\label{theorem_1} Let $\phi$ be an arithmetically normalized Hecke--Maa{\ss} cusp form on $X$. Assume that $\phi$ is tempered at the archimedean place. Then for any $\eps>0$, we have
\begin{equation}\label{thm1bound}\|\phi\|_\infty\ll_\eps \lambda_{\phi}^{39/40+\eps}.\end{equation}
\end{theorem}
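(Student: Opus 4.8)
The plan is to deduce Theorem~\ref{theorem_1} from Theorems~\ref{theorem_2} and~\ref{theorem_3} by an elementary optimization over the height parameters. Since $\phi$ is left $\GL_3(\ZZ)$-invariant and a fundamental domain for $X$ is contained in the Siegel set described in \eqref{xybounds}, it suffices to bound $|\phi(z)|$ for $z\in\Hc_3$ with $y_1,y_2\geq\sqrt{3}/2$, and we may as well write the resulting estimates in terms of the single quantity $Y:=y_1y_2\geq 3/4$.

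First I would recast Theorem~\ref{theorem_3} in arithmetic normalization. As recalled before the statements of the theorems, for $\phi$ tempered at the archimedean place the arithmetic and $L^2$ normalizations differ by a positive constant times $L(1,\phi,\Ad)^{1/2}$, and one has the standard bound $L(1,\phi,\Ad)\ll_\eps\lambda_\phi^\eps$ at the edge of the critical strip. Hence an arithmetically normalized $\phi$ that is tempered at the archimedean place satisfies $\phi(z)\ll_\eps\lambda_\phi^{3/4+\eps}+\lambda_\phi^{5/8+\eps}\,y_1y_2$. On the other hand, majorizing $\min(y_1,y_2)\leq(y_1y_2)^{1/2}$ in Theorem~\ref{theorem_2} yields $\phi(z)\ll_\eps\lambda_\phi^{1+\eps}Y^{-1/2}+\lambda_\phi^{3/2+\eps}Y^{-3/2}$. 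Thus, for every $\eps>0$ and every admissible $z$,
\begin{equation*}
\phi(z)\ll_\eps\frac{\lambda_\phi^{1+\eps}}{Y^{1/2}}+\frac{\lambda_\phi^{3/2+\eps}}{Y^{3/2}}
\qquad\text{and}\qquad
\phi(z)\ll_\eps\lambda_\phi^{3/4+\eps}+\lambda_\phi^{5/8+\eps}\,Y.
\end{equation*}

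The first bound decreases in $Y$ and the second increases, so I would split according to whether $Y$ exceeds the threshold $Y=\lambda_\phi^{7/20}$, which is precisely the value equalizing the dominant term $\lambda_\phi^{3/2}Y^{-3/2}$ of the first bound with the dominant term $\lambda_\phi^{5/8}Y$ of the second; both equal $\lambda_\phi^{39/40}$ there. For $Y\geq\lambda_\phi^{7/20}$ the first bound gives $\phi(z)\ll_\eps\lambda_\phi^{33/40+\eps}+\lambda_\phi^{39/40+\eps}$, and for $Y\leq\lambda_\phi^{7/20}$ the second gives $\phi(z)\ll_\eps\lambda_\phi^{3/4+\eps}+\lambda_\phi^{39/40+\eps}$; in either case $\phi(z)\ll_\eps\lambda_\phi^{39/40+\eps}$. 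Taking the supremum over the Siegel set gives \eqref{thm1bound}.

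I do not expect a real obstacle in this last step: all of the analytic content — the uniform $\GL_3$ Whittaker bound underlying Theorem~\ref{theorem_2} and the pre-trace-formula and amplification input underlying Theorem~\ref{theorem_3} — has already been established, and the only additional ingredient is the routine edge-of-strip bound for $L(1,\phi,\Ad)$. The single point worth a line of verification is that $7/20$ is exactly the exponent balancing the two dominant terms and that the remaining, subdominant contributions indeed stay below $\lambda_\phi^{39/40}$ on their respective ranges of $Y$.
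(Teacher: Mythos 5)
Your argument is correct and is essentially the paper's own proof: reduce to the Siegel set, convert Theorem~\ref{theorem_3} to arithmetic normalization via $L(1,\phi,\Ad)^{1/2}\ll_\eps\lambda_\phi^\eps$, bound $\min(y_1,y_2)\leq(y_1y_2)^{1/2}$ in Theorem~\ref{theorem_2}, and split at $y_1y_2=\lambda_\phi^{7/20}$, with all exponents checking out. One cosmetic slip: Theorem~\ref{theorem_3} is proved \emph{without} amplification (the paper notes this explicitly), so your aside attributing it to ``amplification input'' should be dropped.
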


\begin{remarks}\label{remarks}
Assume that $\phi$ is tempered at the archimedean place. If $\phi$ is arithmetically normalized, then \eqref{thm3bound} holds with the extra factor $L(1,\phi,\Ad)^{1/2}\ll_\eps\lambda_{\phi}^\eps$ on the right hand side by the work of Brumley~\cite[Cor.~2]{Br} or Li~\cite[Thm.~2]{L}. Similarly,
if $\phi$ is $L^2$-normalized, then \eqref{thm2bound} and \eqref{thm1bound} hold with the extra factor $L(1,\phi,\Ad)^{-1/2}\ll_\eps\lambda_{\phi}^{1/2+\eps}$ on the right hand side by the work of Brumley~\cite[Thm.~3]{Br2} (see also \cite[Appendix]{La}). If $\phi$ is self-dual, i.e.\ $\phi$ is the symmetric square of a classical (even or odd) Hecke--Maa{\ss} cusp form on $\SL_2(\ZZ)\bs\Hc_2$ (cf.\ \cite[Thm.~A]{R}), we even know that $L(1,\phi,\Ad)^{-1/2} \ll_\eps \lambda_{\phi}^\eps$ by a result of Ramakrishnan and Wang~\cite[Cor.~C]{RW}, so that in this case no adjustment to \eqref{thm2bound} and \eqref{thm1bound} is necessary. Finally, we note that for $\phi$ self-dual, the exponent $39/40$ in \eqref{thm1bound} cannot be lowered below $3/8$, as follows from the work of Brumley and Templier~\cite[Cor.~1.10]{BT}.
\end{remarks}

We prove our results by employing two very different methods. On the one hand, we estimate the Fourier--Whittaker expansion of $\phi$ termwise, which eventually leads to Theorem~\ref{theorem_2}. Unlike in the rank one case $n=2$, the fact that the group of upper-triangular unipotent $3\times 3$ matrices is not commutative leads to an additional sum over an infinite subset of $\SL_2(\ZZ)$ which requires careful treatment. As a result of independent interest and as a contribution to the analytic theory of higher rank Whittaker functions, we provide in Lemma~\ref{lemma:whittaker_estimate} a new uniform upper bound for the $\GL_3$ Jacquet--Whittaker function.

On the other hand, as in earlier approaches on compact spaces, we use a pre-trace formula, but we make the analysis of the geometric side uniform in the height of the considered point $z \in \mathcal{H}_3$; this is familiar for the group $\GL_2$, but seems to have not yet been worked out in higher rank. This leads to Theorem~\ref{theorem_3}. A key ingredient of the proof is Lemma~\ref{lemma:lower_bound_on_cartan_projection}, which states a simple but efficient bound for the norm of the Cartan projection of upper-triangular matrices. As we do not use amplification, the proof is independent of  Hecke operators and consequently the result also holds for all $L^2$-normalized Maa{\ss} forms, not just the Hecke eigenforms.

Our final Theorem~\ref{theorem_1} is a combination of Theorems~\ref{theorem_2} and \ref{theorem_3}. There is nothing particularly special about the exponent $39/40$, except that the corresponding bound \eqref{thm1bound} is considerably stronger than \eqref{BrumleyTemplier} and already fairly close to \eqref{laplace2}. Marginal improvements are possible, for instance by inserting an amplifier into the pre-trace formula.

\begin{acknowledgement} We thank Gerg\H o Nemes for valuable discussions concerning the special functions that appear in this paper.
We also thank the referee whose careful reading and detailed comments helped us to improve the exposition.
\end{acknowledgement}

\section{Archimedean parameters and the Weyl group}

Associated to every  Hecke--Maa{\ss} cusp form $\phi$ on $X$ (in fact, the Hecke property is not needed for this discussion), there is an $S_3$-orbit of (archimedean) Langlands parameters
\begin{align}\label{mutriple}
(\mu_1,\mu_2,\mu_3)\in\CC^3&\qquad\text{satisfying}\qquad\mu_1+\mu_2+\mu_3=0,\\
\intertext{and a corresponding $S_3$-orbit of (archimedean) spectral parameters}
\notag
(\nu_0,\nu_1,\nu_2)\in\CC^3&\qquad\text{satisfying}\qquad \nu_0=\nu_1+\nu_2.
\end{align}
We follow the conventions of \cite{Bu} except that we shift $\nu_1$ and $\nu_2$ there by $-1/3$, so that (cf.\ \cite[(8.3)]{Bu})
\[(\mu_1,\mu_2,\mu_3)=(\nu_1+2\nu_2,\nu_1-\nu_2,-2\nu_1-\nu_2),\]
\begin{equation}\label{laplaceeigenvalue}
\lambda_\phi=1-\frac{1}{2}\left(\mu_1^2+\mu_2^2+\mu_3^2\right)=1-3\nu_1^2-3\nu_1\nu_2-3\nu_2^2,
\end{equation}
and $(1/3+\nu_2,1/3+\nu_1)\in\CC^2$ are the spectral parameters in \cite[Section~6]{G}\footnote{Unfortunately, the relevant literature has many inconsistencies that are hard to track. For example, in Goldfeld's book \cite{G}, Definition~6.5.2 is inconsistent with Definition~9.4.3, and hence Theorem~6.5.15 contradicts Theorem~10.8.6 and the subsequent discussion. In fact $\nu_1$ and $\nu_2$ should be flipped in \cite[Thm.~6.5.15]{G}, in harmony with \cite[(8.3)]{Bu}. Compare also the functions $I_\nu$ on \cite[p.~19]{Bu} and \cite[p.~154]{G}.}. The Weyl group $S_3$ acts by permutations on the Langlands parameters, hence an $S_3$-orbit of spectral parameters takes the shape
\begin{equation}\label{orbit}
\bigl\{(\nu_0,\nu_1,\nu_2),\ (\nu_2,-\nu_1,\nu_0),\ (-\nu_1,\nu_2,-\nu_0),\ (-\nu_0,-\nu_2,-\nu_1),\ (-\nu_2,-\nu_0,\nu_1),\ (\nu_1,\nu_0,-\nu_2)\bigr\}.
\end{equation}

If $\phi$ is tempered at the archimedean place (as assumed in Theorems~\ref{theorem_2} and \ref{theorem_1}), the Langlands parameters and the spectral parameters are purely imaginary. Moreover, each $S_3$-orbit of spectral parameters contains a unique triple with nonnegative imaginary parts:
\begin{equation}\label{eq:nu-sum}
(\nu_0,\nu_1,\nu_2)\in(i\RR_{\geq 0})^3\qquad\text{satisfying}\qquad \nu_0=\nu_1+\nu_2.
\end{equation}
From now on we regard this triple as the spectral parameter of $\phi$. For convenience, we also introduce
\begin{equation}\label{T0}
T_0:=\max(2,|\nu_0|,|\nu_1|,|\nu_2|)=\max(2,|\nu_0|)\asymp\lambda_\phi^{1/2}.
\end{equation}

\section{The Fourier--Whittaker expansion}

The restriction of $\phi$ to $\Hc_3$ admits a Fourier--Whittaker expansion (cf.\ \cite[(6.2.1)]{G})
\begin{equation}\label{eq:f-w-expansion}
\phi(z)=\sum_{m_1=1}^{\infty} \sum_{m_2\neq 0} \frac{\lambda_{\phi}(m_1,m_2)}{|m_1m_2|} \sum_{\gamma\in\U_2(\ZZ)\bs \SL_2(\ZZ)} \Wc_{\nu_1,\nu_2}^{\sign(m_2)} \left( \begin{pmatrix}
|m_1m_2| & & \\ & m_1 & \\ & & 1
\end{pmatrix} \begin{pmatrix}
\gamma & \\ & 1
\end{pmatrix} z \right),
\end{equation}
where $\U_2$ is the subgroup of upper-triangular unipotent matrices $\bigl(\begin{smallmatrix}1 & * \\ & 1 \end{smallmatrix}\bigr)$ and $\Wc_{\nu_1,\nu_2}^{\pm}:\GL_3(\RR)\to\CC$ is the standard Jacquet--Whittaker function (cf.\ \cite[(6.1.2)]{G} and \cite[(2.10)--(2.11)]{B}). The function $\Wc_{\nu_1,\nu_2}^{\pm}$ is invariant under $\Z(\RR)$, right-invariant under $\O_3(\RR)$, and its restriction to $\Hc_3$ has the following integral representation by a result of Vinogradov--Takhtadzhyan \cite{VT} (cf.\ \cite[(2.12)]{B}):
\begin{multline}\label{eq:whittaker}
\left(\prod_{j=0}^2\frac{\Gamma\left(\frac{1}{2} + \frac{3}{2}\nu_j\right)}{\pi^{\frac{1}{2}+\frac{3}{2}\nu_j}}\right)
\Wc_{\nu_1,\nu_2}^{\pm}\left(\begin{pmatrix}
1 & x_2 & x_3 \\ & 1 & x_1 \\ & & 1
\end{pmatrix} \begin{pmatrix}
y_1y_2 & & \\ & y_1 & \\ & & 1
\end{pmatrix}\right) = \\[6pt]
4 e(x_1 \pm x_2) y_1^{1+\frac{\nu_1-\nu_2}{2}}y_2^{1+\frac{\nu_2-\nu_1}{2}}
\int_0^{\infty} K_{\frac{3}{2}\nu_0}(2\pi y_1\sqrt{1+u}) K_{\frac{3}{2}\nu_0}(2\pi y_2\sqrt{1+u^{-1}}) u^{\frac{3}{4}(\nu_1-\nu_2)} \frac{du}{u},
\end{multline}
where $K_\nu$ is the usual $K$-Bessel function. This formula is a special case of \cite[Thm.~2.1]{S} with the constant $2^{n-1}$ corrected to $2^{2n-3}$ there, as recorded by \cite[(4.3) \& p.~132]{S2}. Indeed, \eqref{eq:whittaker} follows readily from the first display of \cite[p.~318]{S}, by updating the constant $4$ to $8$, then substituting $u^{1/2}$ for $u$, and finally noting that $(\nu_1,\nu_2)$ and $(y_1,y_2)$ there equal $(1/3+\nu_1,1/3+\nu_2)$ and $(y_2,y_1)$ in our notation\footnote{Accordingly, in \cite[(6.1.3)]{G}, the constant $4$ should be $8$, while $\nu_1$ and $\nu_2$ should be flipped.}. An independent verification of \eqref{eq:whittaker} can be obtained by comparing carefully \cite[(5.8)]{BB2} with \cite[(10.1)]{Bu} (or more directly with \cite[(10.2)]{Bu}, after inserting a missing factor of $\pi^{-2}$ on the right hand side there).

We remark in passing that by \cite[Prop.~5.5.2, Eq.~(5.5.5), Thm.~5.9.8]{G}, the left hand side of \eqref{eq:whittaker} is the same over the whole spectral $S_3$-orbit \eqref{orbit}, hence in the tempered case (i.e.\ when $\nu_j\in i\RR$) the corresponding Fourier coefficients $\lambda_\phi(m_1,m_2)$ only differ from each other by six constants on the unit circle. At any rate, since the unipotent part (i.e.\ the entries $x_1,x_2,x_3$) acts on the right hand side of \eqref{eq:whittaker} via a rotation, it is convenient to introduce also
\begin{equation}\label{eq:whittaker_diagonal}
\begin{split}
\tilde{W}_{\nu_1,\nu_2}(y_1,y_2):= &\ 4\pi^{\frac{3}{2}} \prod_{j=0}^2\left| \Gamma\left(\frac{1}{2} + \frac{3}{2}\nu_j\right)\right|^{-1} y_1y_2 \left(\frac{y_1}{y_2}\right)^{\frac{\nu_1-\nu_2}{2}} \\ & \times \int_0^{\infty} K_{\frac{3}{2}\nu_0}(2\pi y_1\sqrt{1+u}) K_{\frac{3}{2}\nu_0}(2\pi y_2\sqrt{1+u^{-1}}) u^{\frac{3}{4}(\nu_1-\nu_2)} \frac{du}{u}.
\end{split}
\end{equation}

\section{An upper bound for the Jacquet--Whittaker function}

In this section we estimate, under the assumption \eqref{eq:nu-sum}, the function $\Wc_{\nu_1,\nu_2}^{\pm}$ given by \eqref{eq:whittaker}, or equivalently the function $\tilde{W}_{\nu_1,\nu_2}$ given by \eqref{eq:whittaker_diagonal}. We start with a bound involving the normalized $\GL_2$ Whittaker function introduced in \eqref{gl2whittaker}:

\begin{lemma}\label{lemma:GL(2)_whittaker_integral}
Assume that $\nu\in i\RR$, and put $T:=\max(1/2,|\nu|)$. Then, for any $t,A>0$, we have
\begin{displaymath}
\int_t^{\infty} \frac{W_{\nu}^2(x)}{\sqrt{x^2-t^2}}\ dx \ll_A (\log(3T)) \left(1+\frac{t}{T}\right)^{-A}.
\end{displaymath}
\end{lemma}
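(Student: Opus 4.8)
The plan is to bound the integral by splitting the range of $x$ at a suitable point depending on $T$, and to use two complementary features of the normalized Whittaker function $W_\nu$ from \eqref{gl2whittaker}: its behavior in the transitional range $x\asymp T$ and its exponential decay for $x\gg T$. Recall that $W_\nu(x)=\sqrt{x}\,K_\nu(2\pi x)/|\Gamma(1/2+\nu)|$ with $\nu=it$, $t\in\RR$, so that the classical uniform asymptotics for $K_{it}(y)$ (as in \cite{Ba}) give the pointwise bound $W_\nu(x)\ll \log(3T)$ for all $x>0$ when $\nu\in i\RR$ (the logarithm accounting for the mild blow-up near the turning point $x=T/(2\pi)$ caused by the factor $|\Gamma(1/2+it)|^{-1}\asymp (1+|t|)^{1/2}e^{\pi|t|/2}$ against $K_{it}$), together with the much stronger decay $W_\nu(x)\ll_A (x/T)^{-A}$ once $x\geq 2T$, say, coming from the exponential decay $K_{it}(y)\ll e^{-y}$ in the oscillatory-to-decaying transition. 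I would isolate these two facts as the only analytic input.

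The first step is the regime $t\leq 2T$. Here the claimed bound is simply $\ll_A \log(3T)$ since $(1+t/T)^{-A}\asymp 1$, so it suffices to show $\int_t^\infty W_\nu^2(x)/\sqrt{x^2-t^2}\,dx\ll \log(3T)$ uniformly. Split this integral at $x=2T$ (assuming $2T\geq t$, which holds in this regime). On $(t,2T)$ use $W_\nu^2(x)\ll (\log 3T)^2$ and the elementary fact $\int_t^{2T}dx/\sqrt{x^2-t^2}=\log\!\big((2T+\sqrt{4T^2-t^2})/t\big)$; one has to be slightly careful that this can be as large as $\log(T/t)$ when $t$ is tiny, so I would instead use the sharper pointwise bound valid for small argument, namely $W_\nu(x)\ll x^{1/2}(1+T)^{1/2}$-type estimates coming from $K_{it}(y)\ll y^{-1/2}e^{\pi|t|/2}$ for $y\lesssim t$, absorbing the possible $\log(1/t)$ against the vanishing of $W_\nu$ as $x\to 0$. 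On $(2T,\infty)$ use $\sqrt{x^2-t^2}\geq x/\sqrt2$ (since $x\geq 2T\geq 2t$ forces $x^2-t^2\geq 3x^2/4$) and $W_\nu(x)\ll_A (x/T)^{-A}$, so the tail contributes $\ll_A \int_{2T}^\infty (x/T)^{-2A}\,dx/x\ll_A 1$. Combining gives $\ll \log(3T)$ as required.

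The second step is the regime $t>2T$, where we need the genuine decay $(1+t/T)^{-A}\asymp (t/T)^{-A}$. Now the entire range $x\geq t$ lies well beyond the turning point $x=T/(2\pi)$, so everywhere on the domain of integration we may use the exponential-decay bound $W_\nu(x)\ll_A (x/T)^{-A'}$ for any $A'$; in fact $W_\nu(x)^2\ll_{A'} (x/T)^{-2A'}$. Then
\[
\int_t^\infty \frac{W_\nu^2(x)}{\sqrt{x^2-t^2}}\,dx
\ll_{A'} \Big(\frac{t}{T}\Big)^{-2A'}\int_t^\infty \frac{(x/t)^{-2A'}}{\sqrt{x^2-t^2}}\,dx,
\]
and after substituting $x=t\cosh\theta$ the remaining integral $\int_0^\infty e^{-2A'\theta}\,d\theta=1/(2A')$ is an absolute constant; choosing $A'=A$ (or $A'$ slightly larger) yields the bound $\ll_A (t/T)^{-2A}\leq (1+t/T)^{-A}$ in this regime, even without the logarithm. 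Together with the first step this proves the lemma. I would present both steps through a single pointwise estimate of the form $W_\nu(x)\ll_A \min\big(\log(3T),\,(x/T)^{-A}\big)\cdot$(a benign small-$x$ factor) to streamline the write-up.

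The main obstacle I anticipate is not any one step but rather pinning down the correct \emph{uniform} pointwise bounds for $W_\nu$ with $\nu=it$ across the three ranges $x\lesssim t$, $x\asymp t$, $x\gtrsim t$ — in particular making sure the near-turning-point logarithm is exactly what survives and does not degrade to a power of $T$, and checking that the singularity $1/\sqrt{x^2-t^2}$ at the lower endpoint is always integrable against the (bounded) function $W_\nu^2$. All of this is standard from the uniform asymptotic expansion of $K_{it}$, so the proof should be short once those bounds are cited; the only real care is bookkeeping of the $\log(3T)$ versus the decay factor in the crossover region $t\asymp T$.
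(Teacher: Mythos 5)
The central analytic input you rely on is false: the pointwise bound $W_\nu(x)\ll\log(3T)$ does not hold for all $x>0$. Near the turning point $x=T/(2\pi)$ the normalized Whittaker function has a genuine bump of size $T^{1/6}$ --- indeed $W_{it}(t/(2\pi))\asymp t^{1/6}$, as recalled in the introduction of this paper following Balogh's uniform asymptotics (also, $|\Gamma(1/2+it)|^{-1}\asymp e^{\pi|t|/2}$ with no power of $|t|$, so your calibration of the normalization is off). Consequently your first step, estimating $\int_t^{2T}W_\nu^2(x)/\sqrt{x^2-t^2}\,dx$ by $(\log 3T)^2$ times an elementary logarithmic integral, breaks down precisely in the range that produces the logarithm in the lemma, namely $t\asymp T$, where the endpoint singularity $1/\sqrt{x-t}$ sits on top of the turning point. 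The correct uniform bounds are $W_\nu(x)\ll\min\bigl(T^{1/6},\,T^{1/4}|2\pi x-T|^{-1/4}\bigr)$ for $0<x<2T$ and $W_\nu(x)\ll e^{-\pi x}$ for $x>T$; if one only had the $T^{1/6}$ bound, an interval of length $\asymp T$ around $x=t\asymp T$ would contribute $\asymp T^{1/3}$, far exceeding $\log(3T)$, so the decay of $W_\nu$ away from the turning point is essential and cannot be replaced by a single pointwise constant.

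The paper's proof uses exactly these two bounds together with the exact evaluation $\int_0^\infty W_\nu^2(x)\,dx/x=1/8$. The tail $x>\max(t,T)$ is killed by exponential decay (this part agrees with your second step, and your hyperbolic substitution there is fine); the range $x>2t$ is absorbed into the $L^2$-integral since $\sqrt{x^2-t^2}\asymp x$ there, which also disposes cleanly of the spurious $\log(1/t)$ you worry about for tiny $t$; the range $t\leq T/20$, $t<x<2t$ is handled by the $O(1)$ bound valid away from the turning point; and the critical range $T/20<t<T$, $t<x<2t$ is treated by a dyadic decomposition in the distance $|2\pi x-T|$ from the turning point, where each of the $O(\log 3T)$ dyadic pieces contributes $O(T^{1/2})$ against the target $t^{1/2}\log(3T)$. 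This decomposition --- not a pointwise logarithmic bound on $W_\nu$ --- is the source of the $\log(3T)$ in the statement. To repair your argument you would need to import the transition-region bound $T^{1/4}|2\pi x-T|^{-1/4}$ and redo the crossover regime along these lines; the outer regimes of your sketch are essentially correct.
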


\begin{proof}
Applying the crude bound (cf.~\cite[Prop.~9]{HM})
\begin{displaymath}
W_{\nu}(x)\ll e^{-\pi x},\qquad x>T,
\end{displaymath}
we obtain
\begin{displaymath}
\int_{\max(t,T)}^\infty \frac{W_{\nu}^2(x)}{\sqrt{x^2-t^2}}\ dx \ll
\frac{1}{\sqrt{t}} \int_t^\infty \frac{e^{-2\pi x}}{\sqrt{x-t}}\ dx \ll e^{-2\pi t},
\end{displaymath}
hence it suffices to prove that
\begin{displaymath}
\int_t^{T} \frac{W_{\nu}^2(x)}{\sqrt{x^2-t^2}}\ dx \ll \log(3T),\qquad 0<t<T.
\end{displaymath}
Observing also that (cf.\ \cite[6.576.4]{GR})
\begin{displaymath}
\int_{2t}^{\infty} \frac{W_{\nu}^2(x)}{\sqrt{x^2-t^2}}\ dx \ll \int_0^{\infty} \frac{W_{\nu}^2(x)}{x}\ dx = \frac{1}{8},
\end{displaymath}
we are left with proving
\begin{displaymath}
\int_t^{2t} \frac{W_{\nu}^2(x)}{\sqrt{x^2-t^2}}\ dx \ll \log(3T),\qquad 0<t<T.
\end{displaymath}

To reformulate, we need to show
\begin{displaymath}
\int_t^{2t} \frac{W_{\nu}^2(x)}{\sqrt{x-t}}\ dx \ll t^{1/2}\log(3T),\qquad 0<t<T,
\end{displaymath}
and we shall accomplish this by using the bound (cf.~\cite[p.~679]{BHo} and \cite[Prop.~9]{HM})
\begin{displaymath}
W_{\nu}(x)\ll \min\bigl(T^{1/6},T^{1/4}|2\pi x-T|^{-1/4}\bigr),\qquad 0<x<2T.
\end{displaymath}
For $0<t\leq T/20$ this bound readily yields that
\begin{displaymath}
\int_t^{2t} \frac{W_{\nu}^2(x)}{\sqrt{x-t}}\ dx \ll \int_t^{2t} \frac{1}{\sqrt{x-t}}\ dx \ll t^{1/2},
\end{displaymath}
hence we can assume that $T/20<t<T$. Then, our task reduces to showing that
\begin{displaymath}
\int_I \frac{T^{1/3}}{\sqrt{x-t}}\ dx + \sum_M \int_{I(M)} \frac{T^{1/2}M^{-1/2}}{\sqrt{x-t}}\ dx \ll T^{1/2}\log(3T),
\end{displaymath}
where
\[I:=\{x\in[t,2t]:\ |2\pi x-T|\leq T^{1/3}\},\qquad I(M):=\{x\in[t,2t]:\ M\leq |2\pi x-T|<2M\},\]
and $M$ runs through the numbers $M\in[T^{1/3},20T]$ of the form $M=2^kT^{1/3}$ with $k\in\NN$.
Note that $I\subset[t,2t]$ is either empty or an interval of length less than $T^{1/3}$, while $I(M)\subset[t,2t]$ is a disjoint union of at most two intervals of lengths less than $M$. Therefore,
\[\int_I \frac{T^{1/3}}{\sqrt{x-t}}\ dx \ll T^{1/3}T^{1/6}=T^{1/2},\qquad\int_{I(M)} \frac{T^{1/2}M^{-1/2}}{\sqrt{x-t}}\ dx \ll T^{1/2}M^{-1/2}M^{1/2}=T^{1/2},\]
and we are done upon noting that the number of dyadic parameters $M$ is $\ll\log(3T)$.
\end{proof}

\begin{lemma}\label{lemma:whittaker_estimate}
For any $y_1,y_2,A>0$  we have, under the assumption \eqref{eq:nu-sum} and with the notation \eqref{T0},
\begin{displaymath}
\tilde{W}_{\nu_1,\nu_2}(y_1,y_2)\ll_A
(\log T_0)\sqrt{y_1y_2}\left(1+\frac{y_1}{T_0}\right)^{-A}\left(1+\frac{y_2}{T_0}\right)^{-A}.
\end{displaymath}
\end{lemma}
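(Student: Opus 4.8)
The plan is to reduce the double $K$-Bessel integral defining $\tilde{W}_{\nu_1,\nu_2}$ to an expression involving the $\GL(2)$ Whittaker integral controlled by Lemma~\ref{lemma:GL(2)_whittaker_integral}. First I would recall that $\nu_0=\nu_1+\nu_2$ is purely imaginary and that $|\mathrm{Re}(\tfrac32\nu_0)|=0$, so the two Bessel factors $K_{\frac32\nu_0}(2\pi y_1\sqrt{1+u})$ and $K_{\frac32\nu_0}(2\pi y_2\sqrt{1+u^{-1}})$ are genuinely oscillatory/exponentially decaying Bessel functions of imaginary order, and the factor $u^{\frac34(\nu_1-\nu_2)}$ has modulus $1$. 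Taking absolute values in \eqref{eq:whittaker_diagonal}, the power $(y_1/y_2)^{(\nu_1-\nu_2)/2}$ also drops out in modulus, and the Gamma factors are handled by the standard lower bound $|\Gamma(\tfrac12+\tfrac32\nu_j)|\gg_\eps (1+|\nu_j|)^{-\eps}e^{-\frac{3\pi}{4}|\nu_j|}$ — actually, since we only need a $\log T_0$ in the end, it is cleaner to keep the Gamma factors and pair them against the exponential decay of the Bessel functions using the uniform asymptotics, exactly as in the single-variable analysis behind \eqref{gl2whittaker}. Concretely, I would rewrite the $K$-Bessel functions in terms of the normalized Whittaker function: $\sqrt{x}\,K_{\frac32\nu_0}(2\pi x)/|\Gamma(\tfrac12+\tfrac32\nu_0)| = W_{\frac32\nu_0}(x)$, so that after substituting $x_1=y_1\sqrt{1+u}$, $x_2=y_2\sqrt{1+u^{-1}}$ the integrand becomes $W_{\frac32\nu_0}(x_1)W_{\frac32\nu_0}(x_2)$ times explicit algebraic factors in $u$.

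The key algebraic observation is that under $x_1=y_1\sqrt{1+u}$ and $x_2=y_2\sqrt{1+u^{-1}}$ one has the relation $(x_1^2-y_1^2)(x_2^2-y_2^2)=y_1^2y_2^2$, i.e. the two "shifted radial" variables are hyperbolically conjugate. Thus the change of variables from $u$ to, say, $x_1$ turns the $u$-integral into an integral over $x_1\in(y_1,\infty)$ with $x_2=x_2(x_1)$ determined by $x_2^2-y_2^2 = y_1^2y_2^2/(x_1^2-y_1^2)$, and a Jacobian that, after bookkeeping, produces precisely a factor $1/\sqrt{x_1^2-y_1^2}$ together with a matching $1/\sqrt{x_2^2-y_2^2}$ up to bounded constants and powers of $y_1,y_2$. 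The upshot is a bound of the shape
\[
\tilde{W}_{\nu_1,\nu_2}(y_1,y_2) \ll \sqrt{y_1y_2}\;|\Gamma(\tfrac12+\tfrac32\nu_0)|\,\Bigl(\prod_j|\Gamma(\tfrac12+\tfrac32\nu_j)|\Bigr)^{-1} \int\int \frac{W_{\frac32\nu_0}(x_1)}{\sqrt{x_1^2-y_1^2}}\cdot\frac{W_{\frac32\nu_0}(x_2)}{\sqrt{x_2^2-y_2^2}}
\]
restricted to the one-dimensional locus $(x_1^2-y_1^2)(x_2^2-y_2^2)=y_1^2y_2^2$; Cauchy–Schwarz in this constrained integral, or more simply splitting the range of $u$ at $u=1$ and using that on $u\le 1$ one has $x_1\asymp y_1$ (resp. on $u\ge 1$, $x_2\asymp y_2$), then reduces matters to the one-variable estimate $\int_{y_2}^\infty W_{\frac32\nu_0}^2(x_2)/\sqrt{x_2^2-y_2^2}\,dx_2$ of Lemma~\ref{lemma:GL(2)_whittaker_integral} with $T\asymp|\nu_0|\asymp T_0$, times a trivial bound $W_{\frac32\nu_0}(x_1)\ll T_0^{1/6}$ on the other factor over the corresponding piece. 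The ratio of Gamma factors contributes $(1+|\nu_1|)^{O(\eps)}(1+|\nu_2|)^{O(\eps)}\ll T_0^\eps$, which can be absorbed; and one obtains the decay $(1+y_1/T_0)^{-A}(1+y_2/T_0)^{-A}$ directly from Lemma~\ref{lemma:GL(2)_whittaker_integral}, since on $u\le 1$ the first Bessel factor also contributes $(1+y_1/T_0)^{-A}$ through the bound $W_{\frac32\nu_0}(x_1)\ll_A T_0^{1/6}(1+x_1/T_0)^{-A}\ll_A T_0^{1/6}(1+y_1/T_0)^{-A}$.

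The main obstacle I anticipate is the bookkeeping of the two factors $1/\sqrt{x_i^2-y_i^2}$ simultaneously: because the substitution couples $x_1$ and $x_2$ through a single variable $u$, one cannot literally factor the double integral, and one has to be careful that splitting at $u=1$ and estimating the "near" Bessel factor trivially by $T_0^{1/6}$ does not lose the decay in the corresponding height variable. I would handle this by noting that on $0<u\le 1$ we have $1\le 1+u\le 2$, hence $y_1\le x_1\le y_1\sqrt 2$, so $W_{\frac32\nu_0}(x_1)\ll_A T_0^{1/6}(1+y_1/T_0)^{-A}$ holds uniformly there with the $y_1$-decay built in — and symmetrically on $u\ge1$ — so that after the split each piece genuinely carries both decay factors, one coming from the trivial-range Whittaker bound and one from Lemma~\ref{lemma:GL(2)_whittaker_integral}. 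A secondary technical point is justifying that the crude exponential bound $W_{\frac32\nu_0}(x)\ll e^{-\pi x}$ for $x>T_0$ makes the tails $u\to0,\infty$ harmless and lets us replace $T$ by $T_0$ cleanly; this is routine given the bounds already quoted in the proof of Lemma~\ref{lemma:GL(2)_whittaker_integral}.
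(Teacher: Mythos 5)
Your reduction has the right skeleton and, up to the Cauchy--Schwarz step, coincides with the paper's proof: one normalizes the Gamma product via Stirling (here the point is that $|\nu_0|=|\nu_1|+|\nu_2|$ under \eqref{eq:nu-sum} and $|\Gamma(\tfrac12+it)|\asymp e^{-\pi|t|/2}$, so the ratio $|\Gamma(\tfrac12+\tfrac32\nu_0)|^2\prod_j|\Gamma(\tfrac12+\tfrac32\nu_j)|^{-1}$ is $\asymp 1$, not merely $T_0^{\eps}$ --- you need this, since the target bound has only a $\log T_0$ and nothing to absorb a $T_0^\eps$ into), rewrites the Bessel factors as $W_{\frac32\nu_0}$, and observes that the substitutions $x_i=y_i\sqrt{1+u^{\pm 1}}$ convert $\tfrac{du}{u}$ together with the weight $(1+u)^{-1/4}(1+u^{-1})^{-1/4}$ into exactly the kernel $dx_i/\sqrt{x_i^2-y_i^2}$ of Lemma~\ref{lemma:GL(2)_whittaker_integral}. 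The paper then applies Cauchy--Schwarz \emph{to the $u$-integral}, splitting the weight as $(1+u^{-1})^{-1/2}$ against $(1+u)^{-1/2}$, so that \emph{both} Whittaker factors end up inside $L^2$-integrals of the form treated by Lemma~\ref{lemma:GL(2)_whittaker_integral}. Your parenthetical ``Cauchy--Schwarz in this constrained integral'' is this argument.

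The route you actually elaborate --- split at $u=1$ and bound the ``near'' factor pointwise by $W_{\frac32\nu_0}(x_1)\ll_A T_0^{1/6}(1+y_1/T_0)^{-A}$ --- has a genuine quantitative gap: it cannot recover the $\log T_0$. The sup-norm of $W_\nu$ is $T^{1/6}$, whereas its mean square against $1/\sqrt{x^2-t^2}$ is only $\log T$ (that is the content of Lemma~\ref{lemma:GL(2)_whittaker_integral}, the bump being confined to a transition window of length $\asymp T^{1/3}$); replacing one factor by its sup therefore costs an irrecoverable $T_0^{1/6}$, and the best your split yields is $T_0^{1/6}(\log T_0)^{1/2}\sqrt{y_1y_2}(\cdots)^{-A}(\cdots)^{-A}$. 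There is also a smaller mismatch: after the pointwise bound the surviving integral is linear in $|W_\nu(x_2)|$, so Lemma~\ref{lemma:GL(2)_whittaker_integral} does not apply directly; one must reintroduce Cauchy--Schwarz with a correctly chosen weight (the naive companion factor $\int_0^1 du/u$ diverges, while pairing against $(1+u)^{-1/2}$ gives $\int_0^1 u^{-1/2}\,du<\infty$). Both issues disappear if you simply perform the single Cauchy--Schwarz on the full $u$-integral before any pointwise estimation, which is the paper's proof; the weakened $T_0^{1/6}$ version would moreover propagate into Theorem~\ref{theorem_2} and degrade the final exponents, so the loss is not cosmetic.
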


\begin{proof} From Stirling's approximation and \eqref{eq:nu-sum}, it follows that
\[\prod_{j=0}^2 \left| \Gamma\left(\frac{1}{2} + \frac{3}{2}\nu_j\right)\right|^{-1} \asymp \left| \Gamma\left(\frac{1}{2}+\nu\right)\right|^{-2}
\qquad\text{with}\qquad \nu:=\frac{3}{2}\nu_0.\]
Then we see from \eqref{eq:whittaker_diagonal} and the Cauchy--Schwarz inequality that
\begin{align*}
\tilde{W}_{\nu_1,\nu_2}(y_1,y_2)
&\ll\sqrt{y_1y_2}\int_0^{\infty}\frac{\left|W_\nu(y_1\sqrt{1+u})W_\nu(y_2\sqrt{1+u^{-1}})\right|}
{(1+u)^{1/4}(1+u^{-1})^{1/4}}\ \frac{du}{u}\\
&\ll\sqrt{y_1y_2}\left(\int_0^{\infty}\frac{W_\nu^2(y_1\sqrt{1+u})}{\sqrt{1+u^{-1}}}\ \frac{du}{u}\right)^{1/2}
\left(\int_0^{\infty}\frac{W_\nu^2(y_2\sqrt{1+u^{-1}})}{\sqrt{1+u}}\ \frac{du}{u}\right)^{1/2}.
\end{align*}
Applying the change of variables $x:=y_1\sqrt{1+u}$ (resp. $x:=y_2\sqrt{1+u^{-1}}$) in the last two integrals, we get
\[\tilde{W}_{\nu_1,\nu_2}(y_1,y_2)\ll\sqrt{y_1y_2}
\left(\int_{y_1}^{\infty} \frac{W_\nu^2(x)}{\sqrt{x^2-y_1^2}}\ dx\right)^{1/2}
\left(\int_{y_2}^{\infty} \frac{W_\nu^2(x)}{\sqrt{x^2-y_2^2}}\ dx\right)^{1/2}.\]
Finally, estimating these two integrals by Lemma~\ref{lemma:GL(2)_whittaker_integral}, we are done.
\end{proof}

\section{Proof of Theorem~\ref{theorem_2}}

Let $\phi$ be an arithmetically normalized Hecke--Maa{\ss} cusp form on $X$ as in Theorem~\ref{theorem_2}. Then the Fourier coefficients
\[\lambda_{\phi}(m_1,m_2)=\lambda_{\phi}(m_1,|m_2|)\]
in \eqref{eq:f-w-expansion} are actual eigenvalues of various Hecke operators (cf.\ \cite[Thm.~6.4.11]{G}). Moreover,
summing over the representatives $\gamma=\bigl(\begin{smallmatrix}a & b \\ c & d \end{smallmatrix}\bigr)$ of $\U_2(\ZZ)\bs \SL_2(\ZZ)$ in \eqref{eq:f-w-expansion} is the same as summing over the coprime pairs $(c,d)\in\ZZ^2$. Computing, as in \cite[(6.5.4)]{G}, the $\GL_2$ Iwasawa decomposition in the upper-left $2\times 2$ block of
\begin{displaymath}
\begin{pmatrix}
\gamma & \\ & 1
\end{pmatrix}
z \O_3(\RR) = \begin{pmatrix}
a & b & \\ c & d & \\ & & 1
\end{pmatrix}
\begin{pmatrix}
y_1y_2 & x_2y_1 & x_3 \\ & y_1 & x_1 \\ & & 1 \end{pmatrix} \O_3(\RR) = \begin{pmatrix}
\frac{y_1y_2}{|cz_2+d|} & * & * \\ & y_1|cz_2+d| & * \\ & & 1
\end{pmatrix} \O_3(\RR),
\end{displaymath}
where $z_2=x_2+iy_2\in\Hc_2$, we see from \eqref{eq:f-w-expansion}--\eqref{eq:whittaker_diagonal} that
\begin{displaymath}
\phi(z)\ll \sum_{m_1=1}^{\infty}\sum_{m_2=1}^{\infty} \frac{|\lambda_{\phi}(m_1,m_2)|}{m_1m_2}
\sum_{\substack{(c,d)\in\ZZ^2\\\gcd(c,d)=1}}\left|\tilde{W}_{\nu_1,\nu_2}(m_1y_1|cz_2+d|,m_2y_2|cz_2+d|^{-2})\right|.
\end{displaymath}
Taking any $\eps>0$, and applying the Cauchy--Schwarz inequality, we obtain
\begin{equation}\label{eq:after_CS}
\phi(z)\ll \left(\sum_{m_1=1}^{\infty}\sum_{m_2=1}^{\infty} \frac{|\lambda_{\phi}(m_1,m_2)|^2}{m_1^{2+2\eps}m_2^{1+\eps}}\right)^{1/2}
\left(\sum_{m_1=1}^{\infty}\sum_{m_2=1}^{\infty} \frac{m_1^{2\eps}}{m_2^{1-\eps}} F(m_1y_1,m_2y_2)^2 \right)^{1/2},
\end{equation}
where
\begin{equation}\label{eq:F-def}
F(s_1,s_2):=\sum_{\substack{(c,d)\in\ZZ^2\\\gcd(c,d)=1}}\left|\tilde{W}_{\nu_1,\nu_2}(s_1|cz_2+d|,s_2|cz_2+d|^{-2})\right|.
\end{equation}

It remains to estimate the two factors on the right hand side of \eqref{eq:after_CS}. The first factor (the arithmetic part) will be estimated
in Subsection~\ref{sub31}. The second factor (the analytic part) will be estimated in Subsection~\ref{sub33}.

\subsection{Estimating the arithmetic part}\label{sub31}
By \cite[Sections~6.3--6.4]{G}, the dual form
\begin{equation}\label{dualform}
\tilde\phi(z):=\phi\left((z^{-1})^t\right),\qquad z\in\Hc_3,
\end{equation}
is an arithmetically normalized Hecke--Maa{\ss} cusp form on $X=\GL_3(\ZZ)\bs\Hc_3$ with spectral parameters $(\nu_0,\nu_2,\nu_1)$ and Hecke eigenvalues
\[\lambda_{\tilde\phi}(m_1,m_2)=\lambda_\phi(m_2,m_1)=\ov{\lambda_\phi(m_1,m_2)}.\]
Introducing the Rankin--Selberg $L$-function (cf.\ \cite[Section~7.4]{G})
\[L\bigl(s,\phi\times\tilde{\phi}\bigr) = \zeta(3s) \sum_{m_1=1}^{\infty} \sum_{m_2=1}^{\infty}
\frac{|\lambda_{\phi}(m_1,m_2)|^2}{m_1^{2s}m_2^s},\qquad\Re s>1,\]
the first double sum on the right hand side of \eqref{eq:after_CS} can be estimated by the result of Brumley~\cite[Cor.~2]{Br} or Li~\cite[Thm.~2]{L}:
\begin{equation}\label{eq:arithmetic_final}
\sum_{m_1=1}^{\infty}\sum_{m_2=1}^{\infty} \frac{|\lambda_{\phi}(m_1,m_2)|^2}{m_1^{2+2\eps}m_2^{1+\eps}} = \frac{L\bigl(1+\eps,\phi\times\tilde{\phi}\bigr)}{\zeta(3+3\eps)} \ll_{\eps} T_0^{\eps}.
\end{equation}

\subsection{Estimating the analytic part}\label{sub33}
We decompose \eqref{eq:F-def} into dyadic subsums: for any $k,l\in\NN$, we define
\begin{displaymath}
D(k,l):=\left\{(c,d)\in\ZZ^2:\ \gcd(c,d)=1,\ 2^k<1+\frac{s_1|cz_2+d|}{T_0}\leq 2^{k+1},\ 2^l<1+\frac{s_2|cz_2+d|^{-2}}{T_0}\leq 2^{l+1}\right\}
\end{displaymath}
and
\begin{displaymath}
F_{k,l}(s_1,s_2):=\sum_{(c,d)\in D(k,l)} \left|\tilde{W}_{\nu_1,\nu_2}(s_1|cz_2+d|,s_2|cz_2+d|^{-2})\right|.
\end{displaymath}
Clearly,
\begin{displaymath}
F(s_1,s_2)=\sum_{k=0}^{\infty}\sum_{l=0}^{\infty} F_{k,l}(s_1,s_2),
\end{displaymath}
which implies, via the Cauchy--Schwarz inequality, that
\begin{equation}\label{eq:F_after_CS_1}
F(s_1,s_2)^2\ll_{\eps} \sum_{k=0}^{\infty}\sum_{l=0}^{\infty} 2^{\eps(k+l)}F_{k,l}(s_1,s_2)^2
\end{equation}
holds for any $\eps>0$.

We estimate $F_{k,l}(s_1,s_2)$ on the right hand side of \eqref{eq:F_after_CS_1} via Lemma~\ref{lemma:whittaker_estimate} as
\begin{displaymath}
F_{k,l}(s_1,s_2)\ll_{\eps,A} 2^{-A(k+l)} T_0^{\eps} \sqrt{s_1s_2} \sum_{(c,d)\in D(k,l)} |cz_2+d|^{-1/2}.
\end{displaymath}
From the two conditions on the range of $s_1,s_2$ in the definition of $D(k,l)$, it is immediate that $F_{k,l}(s_1,s_2)$ vanishes unless $s_1^2s_2\leq 2^{2k+l+3}T_0^3$, which means that
\begin{equation}\label{fkl1}
F_{k,l}(s_1,s_2)\ll_{\eps,A} 2^{-A(k+l)} T_0^{\eps} \sqrt{s_1s_2} \cdot 1_{s_1^2s_2\leq 2^{2k+l+3}T_0^3} \sum_{(c,d)\in D(k,l)} |cz_2+d|^{-1/2}.
\end{equation}
For the $(c,d)$-summation, we only use the estimate $|cz_2+d|\leq 2^{k+1} T_0 s_1^{-1}$, which amounts to saying that we have to sum over the lattice $\ZZ z_2+\ZZ \subset \CC$ in a given disk around the origin. The lattice has first successive minimum at least $\sqrt{3}/2$ and covolume $\Im z_2=y_2$ also at least $\sqrt{3}/2$. Applying again a dyadic decomposition and considering the pairs $(c,d)$ satisfying $2^m\leq |cz_2+d|<2^{m+1}$, we obtain by \cite[Lemma~1]{BHM}
\[\sum_{(c,d)\in D(k,l)} |cz_2+d|^{-1/2} \ll \sum_{\substack{m\in\ZZ\\1/2\leq 2^m\leq 2^{k+1}T_0 s_1^{-1}}} 2^{-m/2}(2^m+2^{2m}y_2^{-1})
\ll 2^{k/2}T_0^{1/2}s_1^{-1/2}+2^{3k/2}T_0^{3/2}s_1^{-3/2}y_2^{-1}.\]
Plugging this into \eqref{fkl1}, we conclude
\begin{displaymath}
F_{k,l}(s_1,s_2)\ll_{\eps,A} T_0^{\eps} 1_{s_1^2s_2\leq 2^{2k+l+3}T_0^3} \left( 2^{\left(1/2-A\right)k - Al} T_0^{1/2} s_2^{1/2} + 2^{\left(3/2-A\right)k - Al} T_0^{3/2}s_1^{-1}s_2^{1/2}y_2^{-1}\right).
\end{displaymath}

Using the previous bound in \eqref{eq:F_after_CS_1}, then substituting $s_1=m_1 y_1$ and $s_2=m_2 y_2$, and finally summing with respect to $m_1$ and $m_2$ as in \eqref{eq:after_CS}, we obtain
\begin{equation}\label{eq:endgame_main}
\begin{split}
&\sum_{m_1=1}^{\infty}\sum_{m_2=1}^{\infty} \frac{m_1^{2\eps}}{m_2^{1-\eps}} F(m_1y_1,m_2y_2)^2 \ll_{\eps,A} T_0^{2\eps} \sum_{k=0}^{\infty} \sum_{l=0}^{\infty} \sum_{m_1=1}^{\infty} \sum_{m_2=1}^{\infty} 1_{m_1^2m_2\leq 2^{2k+l+3}T_0^3y_1^{-2}y_2^{-1}} \\ & \times \frac{m_1^{2\eps}}{m_2^{1-\eps}}\left(2^{(\eps+1-2A)k + (\eps-2A)l}T_0 m_2y_2 + 2^{(\eps+3-2A)k + (\eps-2A)l} T_0^3 m_1^{-2}m_2y_1^{-2}y_2^{-1}\right).
\end{split}
\end{equation}
The first term in the bottom line of \eqref{eq:endgame_main} contributes (after the summation in the first line there)
\begin{displaymath}
\ll_{\eps,A} T_0^{1+2\eps}y_2\sum_{k=0}^{\infty}\sum_{l=0}^{\infty}2^{(\eps+1-2A)k + (\eps-2A)l} \sum_{m_1=1}^{\infty} m_1^{2\eps} \sum_{1\leq m_2\leq 2^{2k+l+3}T_0^3 m_1^{-2}y_1^{-2}y_2^{-1}} m_2^{\eps}.
\end{displaymath}
Here, the $m_2$-sum is at most $(2^{2k+l+3}T_0^3 m_1^{-2}y_1^{-2}y_2^{-1})^{1+\eps}$. Then clearly the resulting $m_1$-sum is convergent, which altogether means that this first term gives rise to
\begin{equation}\label{eq:endgame_term1}
\ll_{\eps,A} T_0^{4+5\eps}y_1^{-2} \sum_{k=0}^{\infty} \sum_{l=0}^{\infty} 2^{(3\eps+3-2A)k + (2\eps+1-2A)l}.
\end{equation}
Similarly, the second term in the bottom line of \eqref{eq:endgame_main} contributes (after the summation in the first line there)
\begin{displaymath}
\ll_{\eps,A} T_0^{3+2\eps} y_1^{-2}y_2^{-1} \sum_{k=0}^{\infty}\sum_{l=0}^{\infty} 2^{(\eps+3-2A)k + (\eps-2A)l} \sum_{m_1=1}^{\infty} m_1^{2\eps-2} \sum_{1\leq m_2\leq 2^{2k+l+3}T_0^3m_1^{-2}y_1^{-2}y_2^{-1}}m_2^{\eps}.
\end{displaymath}
Here, the $m_2$-sum is again at most $(2^{2k+l+3}T_0^3 m_1^{-2}y_1^{-2}y_2^{-1})^{1+\eps}$, and the resulting $m_1$-sum is convergent as above. In the end, the second term gives rise to
\begin{equation}\label{eq:endgame_term2}
\ll_{\eps,A} T_0^{6+5\eps} y_1^{-4}y_2^{-2} \sum_{k=0}^{\infty} \sum_{l=0}^{\infty} 2^{(3\eps+5-2A)k + (2\eps+1-2A)l}.
\end{equation}

We see that in both \eqref{eq:endgame_term1} and \eqref{eq:endgame_term2}, the double sums over $k$ and $l$ are convergent upon choosing $A:=3$, say. Returning to \eqref{eq:endgame_main}, this means that
\begin{equation}\label{eq:analytic_final}
\sum_{m_1=1}^{\infty}\sum_{m_2=1}^{\infty} \frac{m_1^{2\eps}}{m_2^{1-\eps}} F(m_1y_1,m_2y_2)^2 \ll_{\eps} \frac{T_0^{4+5\eps}}{y_1^2} + \frac{T_0^{6+5\eps}}{y_1^4y_2^2}.
\end{equation}

\subsection{Conclusion} The inequalities \eqref{eq:after_CS}, \eqref{eq:arithmetic_final} and \eqref{eq:analytic_final} altogether give
\[\phi(z)\ll_{\eps} \frac{T_0^{2+3\eps}}{y_1} + \frac{T_0^{3+3\eps}}{y_1^2y_2} \ll_\eps \frac{\lambda_{\phi}^{1+2\eps}}{y_1} + \frac{\lambda_{\phi}^{3/2+2\eps}}{y_1^2y_2},\]
and we write this in the more symmetric form
\begin{equation}\label{phibound}
\phi(z)\ll_\eps y_2\left(\frac{\lambda_{\phi}^{1+\eps}}{y_1y_2} + \frac{\lambda_{\phi}^{3/2+\eps}}{(y_1y_2)^2}\right).
\end{equation}

Finally, we utilize the dual form introduced in \eqref{dualform} to show that \eqref{phibound} remains valid when we interchange $y_1$ and $y_2$ on the right hand side. Indeed, we can express $\phi(z)$ as
\[\phi(z)=\tilde\phi\left((z^{-1})^t\right)=\tilde\phi\left(hw(z^{-1})^t w\right),
\qquad h:=\begin{pmatrix}y_1y_2&&\\&y_1y_2&\\&&y_1y_2\end{pmatrix},
\qquad w:=\begin{pmatrix}&&1\\&1&\\1&&\end{pmatrix},\]
where we have the Iwasawa decomposition (see \cite[(6.3.2)]{G} for a similar computation)
\[hw(z^{-1})^t w=\begin{pmatrix}1 & -x_1 & x_1x_2-x_3 \\ & 1 & -x_2 \\ & & 1\end{pmatrix}
\begin{pmatrix}y_1y_2 & & \\ & y_2 & \\ & & 1\end{pmatrix}.\]
Applying \eqref{phibound} and noting that $\lambda_{\tilde\phi}=\lambda_\phi$ by \eqref{laplaceeigenvalue}, we infer
\[\phi(z)=\tilde\phi\left(hw(z^{-1})^t w\right)\ll_\eps y_1\left(\frac{\lambda_{\phi}^{1+\eps}}{y_1y_2} + \frac{\lambda_{\phi}^{3/2+\eps}}{(y_1y_2)^2}\right).\]
Together with \eqref{phibound}, this proves \eqref{thm2bound}. The proof of Theorem~\ref{theorem_2} is complete.

\section{Proof of Theorem~\ref{theorem_3}}

Let $\phi$ be an $L^2$-normalized  Maa{\ss} cusp form on $X$ as in Theorem~\ref{theorem_3}. Note that $\phi$ is allowed to be non-tempered at the archimedean place, hence its Langlands parameters $\mu:=(\mu_1,\mu_2,\mu_3)$ introduced in \eqref{mutriple} are not necessarily purely imaginary, but they satisfy by unitarity and the standard Jacquet--Shalika bounds \cite{JS}
\[\max\bigl(|\Re\mu_1|,|\Re\mu_2|,|\Re\mu_3|\bigr)\leq\frac{1}{2}\qquad\text{and}\qquad
\{\mu_1,\mu_2,\mu_3\}=\{-\ov{\mu_1},-\ov{\mu_2},-\ov{\mu_3}\}.\]
Note also that we can assume \eqref{xybounds} without loss of generality, because we are already assuming $y_1,y_2\geq\sqrt{3}/2$, and replacing $z$ by $\gamma z$ for an arbitrary \emph{unipotent} integral matrix $\gamma\in\U_3(\ZZ)$ leaves both sides of \eqref{thm3bound} unchanged.

We shall establish \eqref{thm3bound} with the help of Selberg's pre-trace formula.  As a preparation, we denote by
\[\mathfrak{a}:=\{\diag(\alpha_1,\alpha_2,\alpha_3)\in\M_3(\RR):\ \alpha_1+\alpha_2+\alpha_3=0\}\]
the Lie algebra of the diagonal torus of $\PGL_3(\RR)$, and by
$C:\PGL_3(\RR)\to\mathfrak{a}/S_3$ the Cartan projection induced from the Cartan decomposition for $\GL_3(\RR)$:
\[g=h k_1\exp (C(g)) k_2\qquad\text{with}\qquad h\in\Z(\RR),\ k_1,k_2\in\O_3(\RR).\]
We identify the complexified dual $\mathfrak{a}^*_{\CC}$ with the set of triples (cf.\ \eqref{mutriple})
\[(\kappa_1,\kappa_2,\kappa_3)\in\CC^3\qquad\text{satisfying}\qquad\kappa_1+\kappa_2+\kappa_3=0,\]
namely such a triple acts on $\mathfrak{a}_{\CC}:=\mathfrak{a}\otimes_\RR\CC$ by the $\CC$-linear map
\[\diag(\alpha_1,\alpha_2,\alpha_3)\mapsto \kappa_1\alpha_1+\kappa_2\alpha_2+\kappa_3\alpha_3.\]

\subsection{The pre-trace formula approach}
Following \cite[Section~2]{BM} and \cite[Sections~2 \& 6]{BP}, we can construct a smooth, bi-$\O_3(\RR)$-invariant function
$f_{\mu}:\PGL_3(\RR)\to\CC$ supported in a fixed compact subset $K$ (which is independent of $\mu\in\mathfrak{a}^*_{\CC}$) with the following properties. On the one hand, the function obeys (cf.\ \cite[(2.3)--(2.4)]{BM})
\begin{equation}\label{eq:inverse_spherical_transform_BP_bound}
f_{\mu}(g)\ll \lambda_{\phi}^{3/2}\left(1+\lambda_{\phi}^{1/2}\|C(g)\|\right)^{-1/2},\qquad g\in\PGL_3(\RR),
\end{equation}
where $\|\cdot\|$ stands for a fixed $S_3$-invariant norm on $\mathfrak{a}\cong\RR^2$. On the other hand, its spherical transform $\tilde{f}_{\mu}:\mathfrak{a}^*_{\CC}/S_3 \to\CC$, defined as in \cite[(17)~of~Section~II.3]{He} or \cite[(2.3)]{BP}, satisfies
\[\tilde{f}_{\mu}(\mu)\geq 1,\qquad \tilde{f}_{\mu}(\kappa)\geq 0\]
for all Langlands parameters $\kappa\in\mathfrak{a}^*_{\CC}$ occurring in $L^2(X)$, including possibly non-tempered parameters. Then, using positivity in Selberg's pre-trace formula (see \cite[(6.1)]{BP}), or more directly by a Mercer-type pre-trace inequality (cf.\ \cite[(3.15)]{BHMM}), we obtain
\begin{equation}\label{eq:pre-trace_inequality}
|\phi(z)|^2 \leq \sum_{\gamma\in\PGL_3(\ZZ)} f_\mu(z^{-1}\gamma z).
\end{equation}

We can now derive quickly a somewhat weaker version of \eqref{thm3bound}. By \eqref{eq:inverse_spherical_transform_BP_bound}, we clearly have
\begin{equation}\label{eq:inverse_spherical_transform_trivial_bound}
f_{\mu}(g)\ll \lambda_{\phi}^{3/2},\qquad g\in\PGL_3(\RR),
\end{equation}
whence by \eqref{eq:pre-trace_inequality},
\begin{equation}\label{eq:pre-trace_inequality_with_number_of_matrices}
|\phi(z)|^2 \ll \lambda_{\phi}^{3/2} \sum_{\substack{\gamma\in\PGL_3(\ZZ)\\\|C(z^{-1}\gamma z)\|\ll 1}} 1.
\end{equation}
Here, the uniform bound $\|C(z^{-1}\gamma z)\|\ll 1$ depends on the fixed compact subset $K$ in which $f_\mu$ is supported.
Labeling the entries of $\gamma$ as
\begin{equation}\label{eq:labeling_gamma}
\gamma = \begin{pmatrix} a & b & c\\ d & e & f\\ g & h & i\end{pmatrix},
\end{equation}
a straightforward calculation gives
\begin{equation}\label{eq:z_-1_gamma_z}
z^{-1}\gamma z= \begin{pmatrix} a - d x_2 + gx_1x_2 - gx_3 & \frac{b + (a - e + h x_1- dx_2 + gx_1x_2)x_2 - (h + gx_2)x_3}{y_2} & * \\ (d - g x_1) y_2 & e + d x_2 - (h + gx_2)x_1 & \frac{ f + e x_1 + dx_3 - (i + hx_1 + gx_3)x_1}{y_1}\\gy_1y_2 & (h + gx_2) y_1 & i + hx_1 + gx_3\end{pmatrix}
\end{equation}
with
\begin{equation}\label{eq:explicit_*}
*=\frac{c + bx_1 - f x_2 + (-e+i+ hx_1)x_1x_2 + ax_3 - (i + hx_1 + dx_2 - gx_1x_2)x_3 - gx_3^2}{y_1y_2}.
\end{equation}
Now $\|C(z^{-1}\gamma z)\|\ll 1$ combined with \eqref{xybounds} implies, in this order,
\begin{equation}\label{eq:entrybounds}
g\ll y_1^{-1}y_2^{-1},\qquad h\ll y_1^{-1},\qquad d\ll y_2^{-1},\qquad a,e,i\ll 1,\qquad f\ll y_1,\qquad b\ll y_2,\qquad c\ll y_1y_2.
\end{equation}
Therefore, in \eqref{eq:pre-trace_inequality_with_number_of_matrices}, the number of relevant matrices $\gamma$ is $\ll y_1^2y_2^2$, and we obtain readily
\begin{equation}\label{eq:conclusive_pre-trace_bound}
\phi(z) \ll \lambda_{\phi}^{3/4} y_1 y_2.
\end{equation}

\subsection{An interlude on the Cartan projection}
To improve upon the bound \eqref{eq:conclusive_pre-trace_bound}, we shall make full use of \eqref{eq:inverse_spherical_transform_BP_bound} instead of applying its weaker version \eqref{eq:inverse_spherical_transform_trivial_bound}. With this aim in mind, we prove the following estimate on the Cartan projection, which works in general for $\PGL_n(\RR)$ or $\SL_n(\RR)$.

\begin{lemma}\label{lemma:lower_bound_on_cartan_projection} Let $g\in\SL_n(\RR)$ be an upper-triangular matrix with positive diagonal entries, and let $\id\in\SL_n(\RR)$ denote the $n\times n$ identity matrix. If $g$ lies in a fixed compact subset of $\SL_n(\RR)$, then
\[\|C(g)\|\asymp\|g^tg-\id\|\asymp\|g-\id\|.\]
\end{lemma}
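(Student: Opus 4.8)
The plan is to reduce everything to elementary linear algebra and compactness. The two claimed comparabilities $\|C(g)\|\asymp\|g^tg-\id\|$ and $\|g^tg-\id\|\asymp\|g-\id\|$ will be handled separately; the second is the one that genuinely uses the hypothesis that $g$ is upper-triangular with positive diagonal.

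First I would dispose of $\|C(g)\|\asymp\|g^tg-\id\|$. Writing the Cartan decomposition $g=k_1\exp(C(g))k_2$ with $k_i\in\O_n(\RR)$ (here $h\in\Z(\RR)$ is trivial since $\det g=1$), we get $g^tg=k_2^t\exp(2C(g))k_2$, so the eigenvalues of the symmetric positive-definite matrix $g^tg$ are exactly $e^{2\alpha_j}$ where $C(g)=\diag(\alpha_1,\dots,\alpha_n)$. Hence $\|g^tg-\id\|$ (any matrix norm, by equivalence of norms on a finite-dimensional space) is comparable to $\max_j|e^{2\alpha_j}-1|$, which in turn is comparable to $\max_j|\alpha_j|\asymp\|C(g)\|$ as long as the $\alpha_j$ stay bounded — and they do, because $g$ ranges over a fixed compact set, so $\|C(g)\|$ is bounded (the continuous map $C$ carries compacta to compacta). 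On a bounded range, $|e^{2t}-1|\asymp|t|$, which closes this half.

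Next, the comparability $\|g-\id\|\asymp\|g^tg-\id\|$. The inequality $\|g^tg-\id\|\ll\|g-\id\|$ is immediate and needs no hypothesis: write $g^tg-\id=g^t(g-\id)+(g^t-\id)=g^t(g-\id)+(g-\id)^t$, and bound using that $\|g^t\|$ is bounded on the compact set. For the reverse inequality $\|g-\id\|\ll\|g^tg-\id\|$, this is where upper-triangularity with positive diagonal enters: it is exactly the statement that the symmetric-space coordinate (the $\exp(C(g))$-part, captured by $g^tg$) controls the full matrix once we know $g$ lies in the ``positive'' slice. I would argue by contradiction and compactness. Suppose not: then there is a sequence $g_m$ in the fixed compact set $\Omega$, upper-triangular with positive diagonal, with $\|g_m-\id\|\geq m\,\|g_m^tg_m-\id\|$. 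Passing to a subsequence, $g_m\to g_\infty\in\Omega$, which is still upper-triangular with nonnegative diagonal; since $\|g_m^tg_m-\id\|\to 0$ (the left side being bounded), we get $g_\infty^tg_\infty=\id$, i.e.\ $g_\infty\in\O_n(\RR)$. But an upper-triangular orthogonal matrix is diagonal with $\pm1$ entries, and nonnegative diagonal forces $g_\infty=\id$. Thus $\|g_m-\id\|\to 0$ as well, and then a local (infinitesimal) version of the estimate finishes the job: near $\id$, write $g=\id+E$ with $E$ small and upper-triangular; then $g^tg-\id=E+E^t+E^tE$, and $E+E^t$ has the strictly-upper-triangular part of $E$ appearing only once (as the strictly-lower part of $E^t$) together with twice the diagonal, so $\|E+E^t\|\gg\|E\|$, while $\|E^tE\|=O(\|E\|^2)$ is negligible; hence $\|g^tg-\id\|\gg\|E\|=\|g-\id\|$ for $g$ in a small enough neighbourhood of $\id$, contradicting the assumed sequence. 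One must check the compact set $\Omega$ is covered: away from $\id$ the ratio $\|g-\id\|/\|g^tg-\id\|$ is a continuous function on the compact set $\Omega\setminus\{g:\|g-\id\|<\delta\}$ (finite there since the denominator never vanishes, by the orthogonality argument above), so it is bounded; inside the $\delta$-ball the infinitesimal computation applies.

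The main obstacle is the reverse inequality $\|g-\id\|\ll\|g^tg-\id\|$: without the hypotheses ``upper-triangular'' and ``positive diagonal'' it is simply false (e.g.\ $g$ a rotation has $g^tg=\id$ but $g\neq\id$), so the proof must use them in an essential way, and the cleanest route is the compactness-plus-linearization argument above rather than an explicit norm estimate. Everything else — the equivalence of norms on $\mathfrak a\cong\RR^2$ and on $\M_n(\RR)$, the comparison $|e^{2t}-1|\asymp|t|$ on bounded sets, the boundedness of $\|C(g)\|$, $\|g^t\|$ on a compact set — is routine.
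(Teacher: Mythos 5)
Your proof is correct, and for the second comparability it takes a genuinely different route from the paper. For $\|C(g)\|\asymp\|g^tg-\id\|$ you argue just as the paper does (which simply declares this part standard, via the Cartan decomposition and the eigenvalues $e^{2\alpha_j}$ of $g^tg$). The divergence is in $\|g^tg-\id\|\asymp\|g-\id\|$: the paper gives a single chain of explicit two-sided estimates, first reducing to $\|g^t-g^{-1}\|$ via $g^tg-\id=(g^t-g^{-1})g$, then splitting $g^t-g^{-1}$ into its strictly lower, diagonal, and strictly upper parts (which are, respectively, the strictly lower part of $g^t$, the difference $a-a^{-1}$ of the positive diagonals, and minus the strictly upper part of $g^{-1}$) and comparing each piece with the corresponding piece of $g-\id$. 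You instead prove the easy direction $\|g^tg-\id\|\ll\|g-\id\|$ algebraically and the hard direction softly: compactness and the rigidity fact that the only upper-triangular orthogonal matrix with nonnegative diagonal is $\id$ reduce matters to a neighbourhood of the identity (your limit point is indeed upper-triangular with nonnegative diagonal, so the rigidity applies), and there the linearization $g^tg-\id=E+E^t+E^tE$ together with $\|E+E^t\|\gg\|E\|$ for upper-triangular $E$ closes the argument. Both proofs are sound and of comparable length; the paper's is fully explicit in its constants, while yours trades effectiveness in the compactness step for a cleaner conceptual core (rigidity plus a transversality computation at the identity) that would adapt readily to other ``positive slices'' of a Cartan-type decomposition.
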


\begin{proof}
The first relation is standard and follows from the Cartan decomposition $g=k_1\exp(C(g))k_2$ with $k_1,k_2\in\SO_n(\RR)$. So let us focus on the second relation. As $g$ lies in a fixed compact subset, we have that $\|g^tg-\id\|\asymp\|g^t-g^{-1}\|$, so it suffices to prove that $\|g^t-g^{-1}\|\asymp\|g-\id\|$. Here, $g^t$ is lower-triangular with some positive diagonal $a$, and $g^{-1}$ is upper-triangular with diagonal $a^{-1}$. Therefore,
\begin{align*}\|g^t-g^{-1}\|&\asymp\|g^t-a\|+\|a-a^{-1}\|+\|a^{-1}-g^{-1}\|\\
&\asymp\|g^t-a\|+\|a-\id\|+\|\id-a^{-1}\|+\|a^{-1}-g^{-1}\|\\
&\asymp\|g^t-\id\|+\|\id-g^{-1}\|\\
&\asymp\|g-\id\|,
\end{align*}
where in the last step we used again that $g$ lies in a fixed compact subset. We are done.
\end{proof}

We are now ready to improve \eqref{eq:conclusive_pre-trace_bound}. In the light of \eqref{eq:entrybounds}, it is convenient to rewrite \eqref{eq:pre-trace_inequality} as
\begin{equation}\label{eq:pre-trace_inequality_cases}
|\phi(z)|^2 \leq \sum_{j=1}^4\sum_{\gamma\in M_j} f_\mu(z^{-1}\gamma z),
\end{equation}
where, in the notation of \eqref{eq:labeling_gamma},
\begin{displaymath}
\begin{split}
M_1&:=\{\gamma\in\PGL_3(\ZZ):\ g=0,\ h=0,\ d=0\},\\
M_2&:=\{\gamma\in\PGL_3(\ZZ):\ g=0,\ h\neq 0,\ d=0\},\\
M_3&:=\{\gamma\in\PGL_3(\ZZ):\ g=0,\ h=0,\ d\neq 0\},\\
M_4&:=\{\gamma\in\PGL_3(\ZZ):\ g\neq 0\ \ \text{or}\ \ hd\neq 0\}.
\end{split}
\end{displaymath}

\subsection{The contribution of $M_1$}
In this case, $g=h=d=0$ forces $a,e,i\in\{\pm 1\}$. Then \eqref{eq:z_-1_gamma_z}--\eqref{eq:explicit_*} simplify to
\begin{equation}\label{eq:z_-1_gamma_z special 1}
kz^{-1} \gamma z = \begin{pmatrix}
1 & \pm\frac{b + (\pm 1\pm 1)x_2}{y_2} & \pm\frac{c + b x_1 - f x_2 +(\pm 1\pm 1) x_1x_2 + (\pm 1\pm 1)x_3}{y_1y_2} \\
& 1 & \pm\frac{f + (\pm 1\pm 1)x_1}{y_1} \\ & & 1
\end{pmatrix},
\end{equation}
where $k$ abbreviates $\diag(a,e,c)\in\O_3(\RR)$, and all combinations of the $\pm$ signs are allowed.

For a dyadic parameter $\lambda_{\phi}^{-1/2}\ll K\ll 1$, let us examine the contribution to \eqref{eq:pre-trace_inequality_cases} of the matrices $\gamma\in M_1$ that satisfy $\|C(z^{-1}\gamma z)\|\asymp K$. Applying \eqref{eq:inverse_spherical_transform_BP_bound}, we obtain
\begin{equation}\label{eq:M1bound}
\sum_{\substack{\gamma\in M_1 \\ K \leq\|C(z^{-1}\gamma z)\|< 2K}} f_\mu(z^{-1}\gamma z) \ll \lambda_{\phi}^{5/4} K^{-1/2} \sum_{\substack{\gamma\in M_1 \\ K \leq\|C(z^{-1}\gamma z)\|< 2K}} 1.
\end{equation}
The matrix count on the right hand side is given by the number of choices for the integral triple $(f,b,c)\in\ZZ^3$ in \eqref{eq:z_-1_gamma_z special 1}. Applying Lemma~\ref{lemma:lower_bound_on_cartan_projection} in the form $\|kz^{-1} \gamma z-1_3\|\ll K$, we infer
\[\sum_{\substack{\gamma\in M_1 \\ K \leq\|C(z^{-1}\gamma z)\|< 2K}} 1\ll\#(f,b,c)\ll (1+y_1K)(1+y_2K)(1+y_1y_2K).\]
Keeping in mind that $y_1,y_2\gg 1$, we can now estimate the right hand side of \eqref{eq:M1bound} as
\[\lambda_{\phi}^{5/4}(K^{-1/2}+y_1y_2K^{1/2}+y_1y_2^2 K^{3/2}+y_1^2y_2K^{3/2}+y_1^2y_2^2K^{5/2}).\]
Summing up over dyadic parameters satisfying $\lambda_{\phi}^{-1/2}\ll K\ll 1$, and using $y_1,y_2\gg 1$ again, we get
\begin{displaymath}
\sum_{\substack{\gamma\in M_1 \\ \lambda_{\phi}^{-1/2}\leq\|C(z^{-1}\gamma z)\|\ll 1}} f_\mu(z^{-1}\gamma z) \ll  \lambda_{\phi}^{3/2} + \lambda_{\phi}^{5/4}y_1^2y_2^2.
\end{displaymath}

We estimate the remaining contribution of the matrices $\gamma\in M_1$ with $\|C(z^{-1}\gamma z)\|<\lambda_{\phi}^{-1/2}$ similarly, this time applying \eqref{eq:inverse_spherical_transform_trivial_bound}. In the end, we obtain
\begin{equation}\label{eq:M1contribution}
\sum_{\gamma\in M_1} f_\mu(z^{-1}\gamma z) \ll  \lambda_{\phi}^{3/2} + \lambda_{\phi}^{5/4}y_1^2y_2^2.
\end{equation}

\subsection{The contribution of $M_2$ and $M_3$} We work out the relevant bound only for $M_2$, as the argument for $M_3$ is very similar. As above, $g=d=0$ forces $a=\pm 1$. In this case, since $h\neq 0$, we see from \eqref{eq:entrybounds} that $y_1\ll 1$ and there are only $O(1)$ choices for the bottom right $2\times 2$ block $\bigl(\begin{smallmatrix} e & f \\ h & i\end{smallmatrix}\bigr)\in\GL_2(\ZZ)$ of $\gamma$. For each such block, we multiply $z^{-1}\gamma z$ by an orthogonal matrix $k\in\bigl(\begin{smallmatrix}a & \\ & \O_2(\RR) \end{smallmatrix}\bigr)$ on the left to arrive at an upper-triangular matrix with positive diagonal entries\footnote{In the case of $M_3$, we would multiply $z^{-1}\gamma z$
by an orthogonal matrix $k\in\bigl(\begin{smallmatrix}\O_2(\RR)&\\&i\end{smallmatrix}\bigr)$ on the right, to achieve the same.}.  This multiplies the top row by $a=\pm 1$, i.e.\ \eqref{eq:z_-1_gamma_z}--\eqref{eq:explicit_*} become
\begin{displaymath}
kz^{-1}\gamma z = \begin{pmatrix} 1 & \pm\frac{b + (\pm 1 - e + h x_1)x_2 - hx_3}{y_2} & \pm\frac{c + bx_1 - f x_2 + (-e+i+ hx_1)x_1x_2 \pm x_3 - (i + hx_1)x_3}{y_1y_2} \\ & * & * \\ & & *
\end{pmatrix}.
\end{displaymath}
For a dyadic parameter $\lambda_{\phi}^{-1/2}\ll K\ll 1$, we estimate similarly as in \eqref{eq:M1bound},
\begin{displaymath}
\sum_{\substack{\gamma\in M_2 \\ K \leq\|C(z^{-1}\gamma z)\|< 2K}} f_\mu(z^{-1}\gamma z) \ll \lambda_{\phi}^{5/4} K^{-1/2} \sum_{\substack{\gamma\in M_2 \\ K \leq\|C(z^{-1}\gamma z)\|< 2K}} 1.
\end{displaymath}
Applying Lemma~\ref{lemma:lower_bound_on_cartan_projection} again, we can bound the matrix count on the right hand side as
\begin{displaymath}
\sum_{\substack{\gamma\in M_2 \\ K \leq\|C(z^{-1}\gamma z)\|< 2K}} 1\ll\#(b,c)\ll (1+y_2K)(1+y_1y_2K)\ll 1+y_2^2 K^2,
\end{displaymath}
and a similar calculation as before leads to the bound
\begin{equation}\label{eq:M2contribution}
\sum_{\gamma\in M_2} f_\mu(z^{-1}\gamma z) \ll  \lambda_{\phi}^{3/2} + \lambda_{\phi}^{5/4}y_2^2.
\end{equation}
The analogous argument applied to $M_3$ yields
\begin{equation}\label{eq:M3contribution}
\sum_{\gamma\in M_3} f_\mu(z^{-1}\gamma z) \ll  \lambda_{\phi}^{3/2} + \lambda_{\phi}^{5/4}y_1^2.
\end{equation}

\subsection{The contribution of $M_4$} We see from the first three entries of \eqref{eq:entrybounds} that $M_4$ can only contribute when $y_1\ll 1$ and $y_2\ll 1$, and then from \eqref{eq:entrybounds} and \eqref{eq:inverse_spherical_transform_trivial_bound} we obtain readily that
\begin{equation}\label{eq:M4contribution}
\sum_{\gamma\in M_4} f_\mu(z^{-1}\gamma z) \ll  \lambda_{\phi}^{3/2}.
\end{equation}

\subsection{Conclusion} By \eqref{eq:pre-trace_inequality_cases}--\eqref{eq:M4contribution}, we arrive at \eqref{thm3bound}, and the proof of Theorem~\ref{theorem_3} is complete.

\section{Proof of Theorem~\ref{theorem_1}}

Let $\phi$ be an arithmetically normalized Hecke--Maa{\ss} cusp form on $X$ as in Theorem~\ref{theorem_1}. Then \eqref{thm2bound}, and also \eqref{thm3bound} with an extra factor of $\lambda_\phi^\eps$, hold in a suitable fundamental domain for $X=\GL_3(\ZZ)\bs\Hc_3$ in the light of \eqref{xybounds}, Theorems~\ref{theorem_2}--\ref{theorem_3}, and the remarks made after Theorem~\ref{theorem_1}. We balance between these two bounds as follows.

If $y_1y_2>\lambda_{\phi}^{7/20}$, then we apply \eqref{thm2bound} to obtain
\begin{displaymath}
\phi(z) \ll_\eps\min(y_1,y_2)\left(\frac{\lambda_{\phi}^{1+\eps}}{y_1y_2} + \frac{\lambda_{\phi}^{3/2+\eps}}{(y_1y_2)^2}\right) \leq \frac{\lambda_{\phi}^{1+\eps}}{(y_1y_2)^{1/2}} + \frac{\lambda_{\phi}^{3/2+\eps}}{(y_1y_2)^{3/2}} \ll \lambda_{\phi}^{39/40+\eps}.
\end{displaymath}
Else, if $y_1y_2\leq \lambda_{\phi}^{7/20}$, we apply \eqref{thm3bound} with an extra factor of $\lambda_\phi^\eps$ to obtain
\begin{displaymath}
\phi(z)\ll_\eps\lambda_{\phi}^{3/4+\eps} + \lambda_{\phi}^{5/8+\eps}y_1y_2 \ll \lambda_{\phi}^{39/40+\eps}.
\end{displaymath}
The proof of Theorem~\ref{theorem_1} is complete.

\end{document}